\newtheorem{theorem}{Theorem}[section]
\newtheorem{corollary}[theorem]{Corollary}
\newtheorem {example}[theorem]{Example}
\theoremstyle{definition}
\newtheorem{definition}{Definition}[section]
\theoremstyle{definition}
\newtheorem{remark}[theorem]{Remark}
\title[$n$-Fold Cyclic Branched Covers and Overtwisted Contact Structures]{$n$-Fold Cyclic Branched Covers and Overtwisted Contact Structures}
\author[J. Ceniceros]{Jose Ceniceros}
\address{Hamilton College, Clinton, NY, USA}
\email{jcenicer@hamilton.edu}
\begin{document}

\maketitle

\begin{abstract}
This article presents an alternate way to prove a result originally proven by Harvey, Kawamuro, and Plamenvskaya in \cite{HaKaPl}. We accomplish this by explicitly constructing an overtwisted disk in the $n$-fold cyclic branched cover of $S^3$ with the standard contact structure branched along a carefully selected transverse knot. Furthermore, by utilizing this construction of the overtwisted disk, we can see that the overtwisted disk is contained in the complement of the branch locus.
\end{abstract}

\section{Introduction}\label{intro}

This article provides an alternate proof of a result of Harvey, Kawamuro, and Plamenvskaya in \cite{HaKaPl}. The result classifies the contact structures obtained from $n$-fold cyclic branched covers of $\mathbb{R}^3$ branched along specific transverse knots. Contact structures on a 3-manifold are classified as tight or overtwisted. Classifying contact structures on a 3-manifold has become a central question in contact topology. In \cite{El1}, Eliashberg showed that overtwisted contact structures on a compact 3-manifold are classified by their homotopy type. In addition to classifying contact structures on a 3-manifold, there has also been interest in classifying specific submanifolds of a contact manifold. The submanifolds are known as Legendrian and transverse knots. The study of these objects has become interesting in their own right, and they have gained the attention of low-dimensional topologists \cite{Ar}. Eliashberg and Frasher proved that the Legendrian knot is completely classified by the Thurston-Bennequin invariant and rotation number \cite{ElFr}. Etnyre and Honda proved that the torus and figure-eight knots are all completely classified by the Thurston-Bennequin invariant and rotation number \cite{EtHo}. 

A natural construction in topology is the branched cover of a 3-manifold. The interest in the relationship between 3-manifolds and branched coverings was sparked by the following result by Alexander \cite{Al}. 
\begin{theorem}
Every closed orientable 3-manifold is a branched covering space of $S^3$ with branch locus a link in $S^3$. 
\end{theorem}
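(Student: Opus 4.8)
The plan is to construct the branched covering directly from a Heegaard splitting of $M$. Every closed orientable $3$-manifold admits a Heegaard splitting $M = H_1 \cup_\phi H_2$, where $H_1, H_2$ are handlebodies of some common genus $g$ and $\phi \colon \partial H_1 \to \partial H_2$ is an orientation-reversing homeomorphism; I would begin by fixing such a splitting, obtained for instance from a triangulation of $M$. On the target side, write $S^3 = B_1 \cup_{\mathrm{id}} B_2$ as the union of two $3$-balls glued by the identity along $S^2 = \partial B_1 = \partial B_2$, so that the $g$ extra handles on each side of $M$ must be accounted for by branching.

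The first step is to realize a handlebody as a branched cover of a ball: a genus-$g$ handlebody is a branched covering space of $B^3$ branched over a properly embedded trivial tangle $\tau \subset B^3$; for example, the double cover of $B^3$ branched over $g+1$ boundary-parallel arcs is a genus-$g$ handlebody, and its restriction to the boundary is the hyperelliptic double cover $\Sigma_g \to S^2$ branched over $2g+2$ points. (More generally one may use a simple $d$-fold cover with suitable branch data.) This yields branched coverings $p_i \colon H_i \to B_i$ whose restrictions $\pi_i \colon \partial H_i \to S^2$ are branched over a finite set. The second step is gluing: if $\phi$ intertwines $\pi_1$ and $\pi_2$ — i.e.\ $\pi_2 \circ \phi = \pi_1$ after identifying the two copies of $S^2$ — then $p_1$ and $p_2$ patch together to a branched covering $M \to S^3$, and its branch locus is the closed $1$-submanifold $\tau_1 \cup \tau_2 \subset S^3$ obtained by joining the two tangles along their shared endpoints, which is automatically a link since it has no free ends.

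The crux, which I expect to be the main obstacle, is arranging that $\phi$ is compatible with the coverings, since a generic mapping class of $\Sigma_g$ does not descend to $S^2$. To handle this I would use that $\mathrm{MCG}(\Sigma_g)$ is generated by Dehn twists and track which twists lift to, equivalently are symmetric with respect to, the chosen cover. For the hyperelliptic double cover this works cleanly when $g \le 2$, where the hyperelliptic involution is central; in general one must allow branched covers of larger degree and more branch points, chosen so that the subgroup of liftable mapping classes is large enough to contain $\phi$ up to isotopy. Establishing that such a choice always exists is precisely the content of the Birman--Hilden and Hilden--Montesinos circle of ideas, and it is where essentially all of the work lies; once $\phi$ has been made compatible, the theorem is immediate from the gluing step.
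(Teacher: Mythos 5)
The paper does not actually prove this statement: it is Alexander's classical theorem, quoted with a citation to \cite{Al}. Alexander's own argument is the elementary simplicial one: triangulate the closed orientable $3$-manifold $M$, pass to the barycentric subdivision and label each vertex by the dimension of the cell it is the barycenter of, map each $3$-simplex linearly onto a standard $3$-simplex preserving labels, and use the checkerboard $2$-coloring of the $3$-simplices to send black and white simplices to the two halves of $S^3$ viewed as the double of $\Delta^3$. Away from the image of the $1$-skeleton this is a covering, so $M$ is a branched cover of $S^3$ branched over a $1$-complex, which a small final adjustment turns into a link. This argument is completely elementary, needs no control of the covering degree, and makes no use of Heegaard splittings.

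Your proposal takes a genuinely different route, and it is worth being clear about what it does and does not establish. Steps one and two are fine: the double cover of $B^3$ branched over $g+1$ boundary-parallel arcs is indeed a genus-$g$ handlebody restricting to the hyperelliptic cover $\Sigma_g \to S^2$ over $2g+2$ points, and if the gluing map intertwines the two boundary coverings the two branched covers of balls patch to a branched cover of $S^3$ branched over a link. But the step you yourself flag as the crux is where the entire content of the theorem sits in this approach, and you do not supply it: you only point to the Birman--Hilden and Hilden--Montesinos circle of ideas. Note moreover that the degree-$2$ version cannot be repaired in general -- for $g \geq 3$ the liftable (hyperelliptic) mapping classes form a proper subgroup of the mapping class group, and there exist closed orientable $3$-manifolds that are not double branched covers of $S^3$ at all -- so the escape to higher-degree simple covers is not an optional refinement but the essential mechanism. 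Making it work requires the nontrivial surjectivity-of-lifting result for $3$-fold simple covers (and, in the standard write-ups, the insertion of a collar $\Sigma_g \times I \to S^2 \times I$ whose branch locus is the trace of a braid of the branch points, rather than an isotopy of $\phi$ into a liftable class for a fixed cover). If carried out, your route proves the stronger Hilden--Montesinos refinement that the paper cites separately \cite{Hi,Mo}; as written, however, it proves Alexander's theorem only modulo that machinery, which is considerably heavier than Alexander's triangulation argument and is precisely the part left unproved.
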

Alexander's result illuminated the profound relationship between links, branched covers, and 3-manifolds. Over time, his findings have been refined, with Hilden and Montesinos contributing refinements that state that every closed orientable 3-manifold is a 3-fold branched covering branched along a knot \cite{Hi,Mo}. Furthermore, there have also been restrictions to branched covers looking at a fixed branch locus and still obtaining all closed-oriented 3-manifolds. A link $K$ is called \textit{universal} if every 3-manifolds is obtained as a branched cover branching along K. In \cite{Th}, Thurston proved the existence of a universal link. Since then, it has been shown that the figure-eight knot, Borromean rings, and Whitehead link and $9_{46}$ are also universal \cite{HiLoMo}.  

The study of 3-manifolds has greatly benefited from the study of branched covers.  Gonzalo in \cite{Go} carefully extended the definition of branched covers to the setting of contact structures. Furthermore, in \cite{HaKaPl}, Harvey, Kawamuro, and Plamenevskaya studied contact manifolds that arise from $n$-fold cyclic branched covers branched along transverse knots in $(S^3, \xi_{std})$. The authors of that paper were interested in the following question: Suppose that transverse knots $K_1$, $K_2$ are smoothly isotopic, and $sl(K_1) = sl(K_2)$. If  $p \geq 2$ is fixed, will the $n$-fold cyclic covers branched over $K_1$ and $K_2$ be contactomorphic? Although the authors were not able to prove that all of the contact manifolds arising from branched covers are contactomorphic, they were able to prove for several transverse knots that the branched covers are contactomorphic.  More interestingly, they were able to show that tight contact structures give rise to overtwisted contact structures when considering the $n$-cyclic branched cover branched along carefully chosen transverse knots. Specifically, they showed that $n$-fold cyclic branched cover is overtwisted if the branch locus $L$ is obtained as a transverse stabilization of another transverse link. We reprove this result using a construction of an overtwisted disk introduced by Gompf in \cite{Gompf1}. This construction, used in Proposition 5.1 of \cite{Gompf1}, provides a method to distinguish fillable contact structures. Using this alternate construction, we show that the overtwisted disk is completely contained in the complement of the branch locus. 

This article is organized as follows. Section~\ref{contact} reviews the necessary definitions and results regarding contact manifolds. In Section~\ref{LegTrans}, we provide basic definitions and results of Legendrian and transverse knots. In Section~\ref{Classical}, we define the classical invariants for Legendrian knots and transverse knots. In the same section, we include a well-known formula for computing the Thurston-Bennequin invariant directly from the front projection of a Legendrian knot. In Section~\ref{Braids}, we present the connection between transverse knots and braids. In Section~\ref{Branched}, we recall the definition of $n$-fold branched covers of a contact manifold branched along a transverse knot. Lastly, in Section~\ref{Proof}, we reprove the result of Harvey, Kawamuro, and Plamenvskaya regarding overtwisted contact structures resulting from the $n$-fold branched cover construction.  

\section{Contact Structures}\label{contact}
This section will introduce the basic definition and results regarding contact structures on a $ 3$ manifold. This section aims to give an overview of contact geometry and only cover results relevant to the result in this paper. For a complete description of the theory, the reader is referred to \cite{Et2, Ge}. We begin with the following definition.

\begin{definition}
An oriented 2-plane field $\xi$ on a 3-manifold $Y$ is called a contact structure if there exists a 1-form $\alpha \wedge d \alpha \neq 0$. The pair $(Y, \xi)$ is called a \emph{contact manifold}. 
\end{definition}

The condition $\alpha \wedge d\alpha \neq 0$ is known as a non-integrability condition. This condition ensures that there is no embedded surface in $Y$, which is tangent to $\xi$ on any open neighborhood. The following are three well-known examples of contact structures defined on $\mathbb{R}^3$.

\begin{example}\label{stdcon}
Consider the 3-manifold $Y=\mathbb{R}^3$ with standard Cartesian coordinates $(x,y,z)$ and the 1-form
\[ \alpha = dz -y dx \]
with a simple computation we can confirm that $\alpha \wedge d \alpha \neq 0$. Thus, $\alpha$ is a contact form and $\xi_{std} = \mathrm{ker}(\alpha) = \mathrm{ker}(dz -ydx)$ is a contact structure on $\mathbb{R}^3$. 	
\end{example}

\begin{remark}
Example \ref{stdcon} is commonly referred to as the standard contact structure on $\mathbb{R}^3$. At any point on the $xz$-plane, the plane field is horizontal. Furthermore, moving along a ray perpendicular to the $xz$-plane, the plane field will always be tangent to this ray and rotate by $\pi/2$ in a left-handed manner along the ray.
\end{remark}

\begin{example}
\label{rotcon}
Consider the 3-manifold $Y = \mathbb{R}^3$ with cylindrical coordinates $(r, \theta, z)$ and the 1-form 
\[ \alpha = dz + r^2d\theta. \]
Again, with a simple computation, we can check that $\alpha \wedge d \alpha \neq 0$. So $\xi_{sym} =\mathrm{ker}(\alpha) = \mathrm{ker}(dz + r^2d\theta)$ is a contact structure on $M$. 
\end{example}

\begin{remark}
In this contact manifold, the contact planes twist clockwise as you move along any ray perpendicular to the $z$-axis. At the $z$-axis, the contact planes are horizontal. 
\end{remark}

\begin{example}
\label{overcon}
Consider the 3-manifold $Y = \mathbb{R}^3$ with cylindrical coordinates $(r, \theta, z)$ and the 1-form 
\[ \alpha = \cos(r)dz + r \sin(r)d\theta. \]
Again, with a simple computation, we can check that $\alpha \wedge d \alpha \neq 0$. So $\xi_{OT} =\mathrm{ker}(\alpha) = \mathrm{ker}(\cos(r)dz + r \sin(r)d\theta)$ is a contact structure on $M$. 
\end{example}

\begin{remark}
In this case, we see that $\xi_{OT}$ is horizontal along the $z$-axis, and as you move out on any ray perpendicular to the $z$-axis, the planes will twist in a clockwise manner, and the planes will make infinitely many full twists as $r$ goes towards infinity. 
\end{remark}

Contact structures fall into two disjoint classes: tight and overtwisted. A contact manifold $(Y, \xi)$ is \textit{overtwisted} if there exists an embedded disk $D \in Y$ such that $T_pD = \xi_p$ for every $p \in \partial D$. This means that the contact planes are tangent to the disk's boundary $D$. On the other hand, the contact manifold is said to be \textit{tight} if it is not overtwisted. It is only natural to classify contact structures. We have the following notion of contact manifolds being equivalent. 

\begin{definition}
Two contact structures $\xi_0$ and $\xi_1$ on a manifold $Y$ are called contactomorphic if there is a diffeomorphism $f: Y \rightarrow Y$ such that $f$ send $\xi_0$ to $\xi_1$: 
\[ f_*(\xi_0) = \xi_1. \]
\end{definition}

The contact structures in Example \ref{stdcon} and Example \ref{rotcon} are contactomorphic. The contact structure defined in Example 2.4 is not contactomorphic to those defined in Example 2.2 and  Example 2.4. An explicit contactomorphism between Example \ref{stdcon} and Example \ref{rotcon} can be found in \cite{Ge}.

\section{Legendrian and Transverse Knots}\label{LegTrans}

This section will introduce a few basic definitions and results for Legendrian and transverse knots. This section is not meant to be a complete survey on the subject; for a detailed description of knot theory supported in a contact 3-manifold, see \cite{Et1,Ge}. We will also assume that the reader has some knowledge of topological knots. For a detailed treatment of this topic, see \cite{Ro}. 

There are two distinct types of knots that adhere to the geometry imposed by contact structures: Legendrian and transverse knots.

\begin{definition}
A Legendrian knot $L$ in a contact 3-manifold $(Y, \xi)$ is an 
embedded $S^1$ that is always tangent to $\xi$:
\[ T_x L \in \xi_x, \quad x \in L.\]
\end{definition}

Two Legendrian knots in $(\mathbb{R}^3, \xi_{std})$ are \textit{Legendrian isotopic} if they can be deformed into each other through a continuous family of Legendrian knots. In this article, we focus on the \textit{front projection} of a Legendrian knot, denoted by $\Pi(L)$, which is its projection onto $\mathbb{R}^2$ given by $(x, y, z) \mapsto (x, z)$.

For a Legendrian knot $L$, the front projection $\Pi(L)$ captures essential features of $L$ useful in studying Legendrian knots. This projection must avoid vertical tangencies, have cusps as the only non-smooth points, and ensure that at each crossing, the slope of the overcrossing is less than that of the undercrossing.

By adhering to these conditions, the front projection provides a valuable tool for studying Legendrian knots in $\mathbb{R}^3$.

An interesting note about Legendrian knots is that you have different Legendrian knot representatives of a topological knot type. The operation that produces different Legendrian knots of the same topological knot type is called \textit{stabilization}. A stabilization of a Legendrian knot $L$ in a front projection of $L$ can be obtained by removing a strand and replacing it with a zig-zag, see Figure~\ref{LegStab}. We denote a positive stabilization by $S_{+}$ and a negative stabilization by $S_{-}$. 

\begin{figure}
    \centering 
    \begin{tikzpicture}[
node distance = 3mm,
image/.style = {scale=0.3,},
arr/.style = {-Triangle, semithick}
]
\node[image] (A) {\includegraphics[scale=.2]{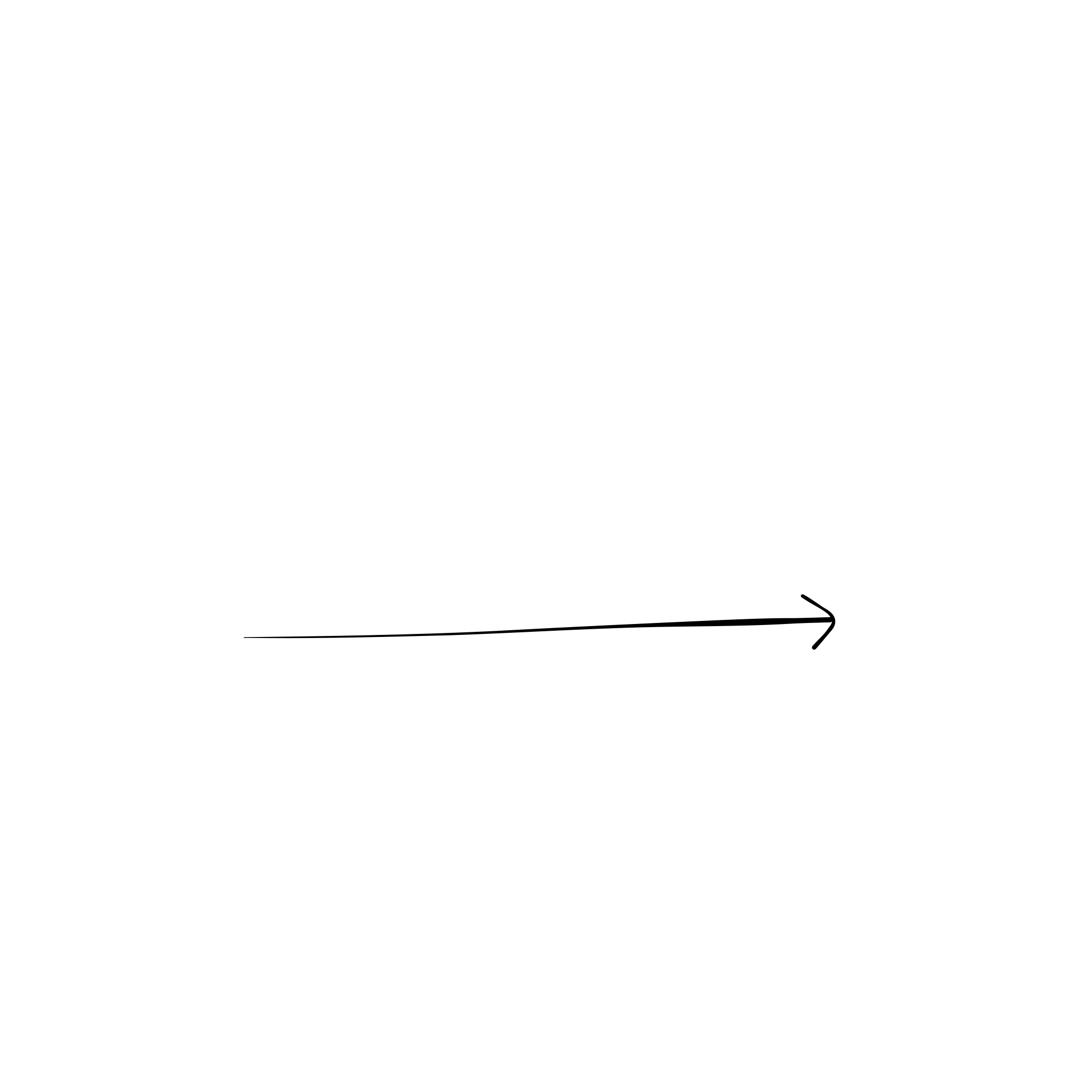}};
\node[image,right= 4cm of A] (D){};
\node[image, below= 0mm of D] (C) {\includegraphics[scale=.2]{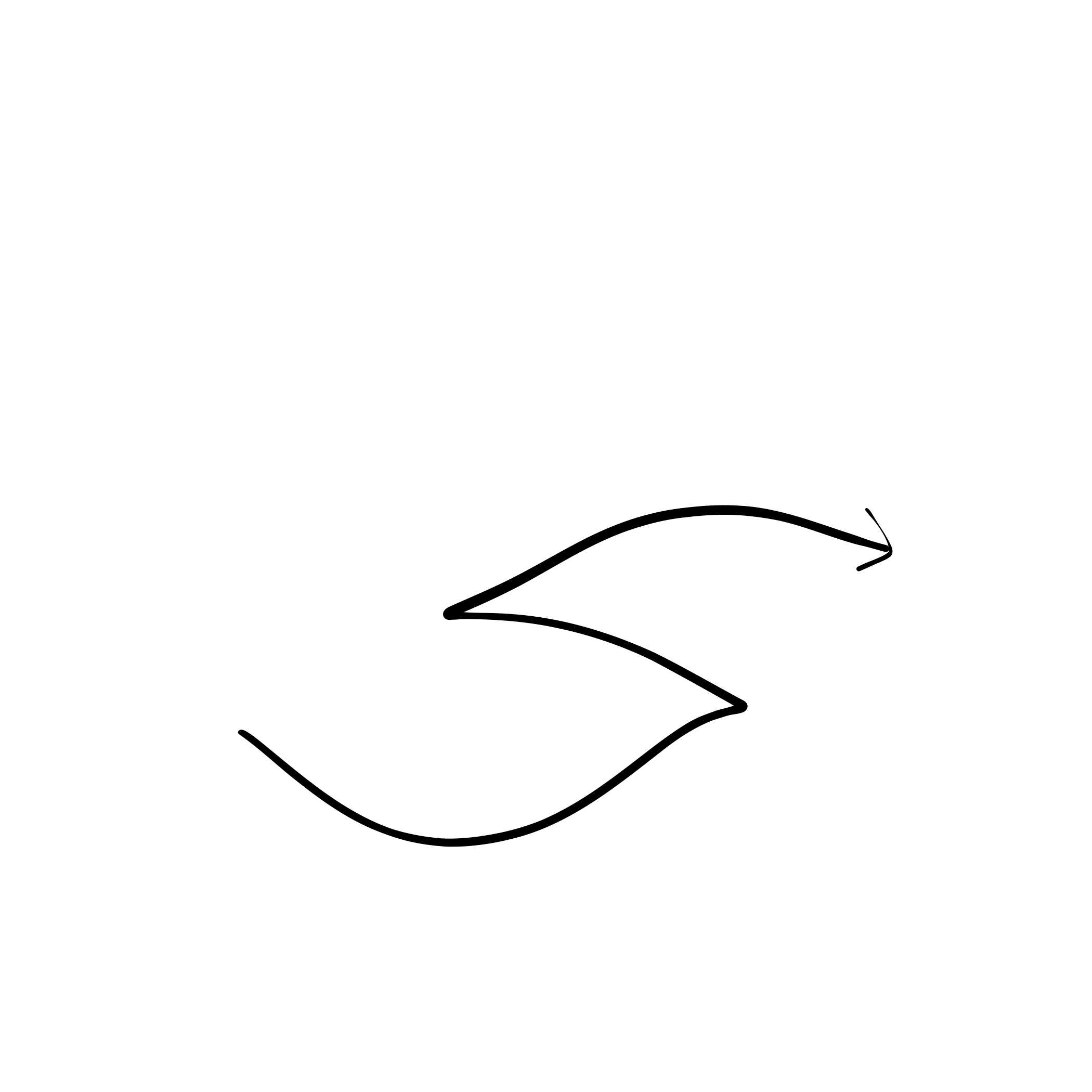}};
\node[image,above= 0mm of D] (B) {\includegraphics[scale=.2]{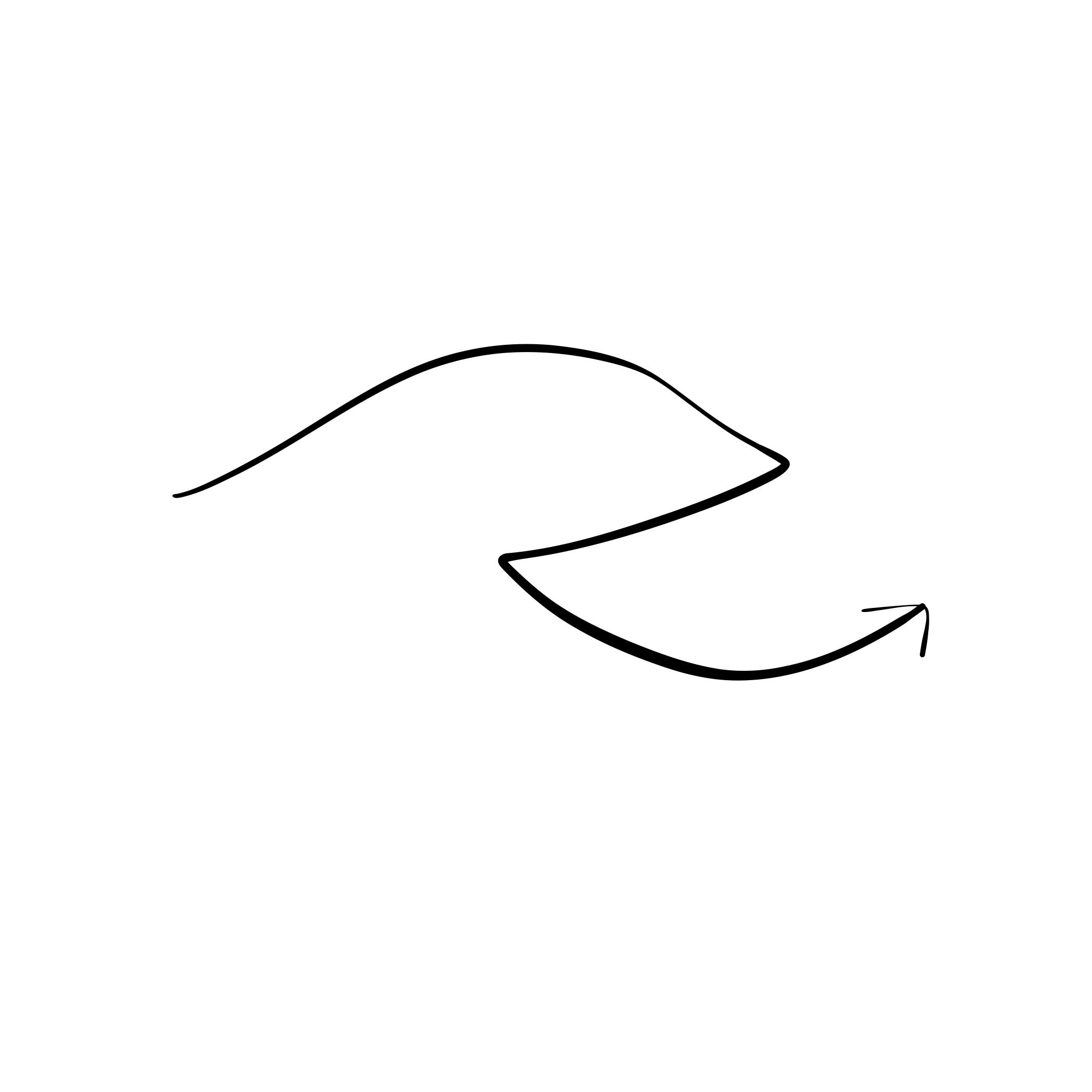}};
\draw[line width=0.30mm, ->](A) --(B);
\draw[line width=0.30mm, ->](A) --(C);
\node [draw=none, inner sep = 0] at (2,1) {\large{$S_{+}$}};
\node [draw=none, inner sep = 0] at (2,-1) {\large{$S_{-}$}};
\end{tikzpicture}
    \caption{A stabilization of a Legendrian knot $L$.}
    \label{LegStab}
\end{figure}

\begin{definition}
A transverse knot $K$ in a contact manifold $(Y, \xi)$ is an 
embedded $S^1$ that is always transverse to $\xi$:
\[ T_x K \oplus \xi_x = T_x Y, \quad x \in K. \]
\end{definition}

We classify transverse knots up to transverse isotopy. We say that two transverse knots are \textit{transversely isotopic} if an isotopy takes one knot to the other while remaining transverse to the contact planes. 

Similar to the case of Legendrian knots, transverse knots can be studied using front projects. The definition of the front projection of transverse knots is similar in spirit to the front projection of Legendring knots. For a description of transverse knots, see \cite{Et}. There is also the notation of stabilization for transverse knots. In the case of transverse knots, the stabilization is formed by taking an arc in the front projection, see the left figure in Figure~\ref{TransStab}, and replacing it with an arc illustrated in the right figure of Figure~\ref{TransStab}. We will denote the stabilization of the transverse knot $K$ by $K_{stab}$. 

\begin{figure}
    \centering 
    \begin{tikzpicture}[
node distance = 2mm,
image/.style = {scale=0.4,},
arr/.style = {-Triangle, semithick}
]
\node[image] (A) {\includegraphics[scale=.2]{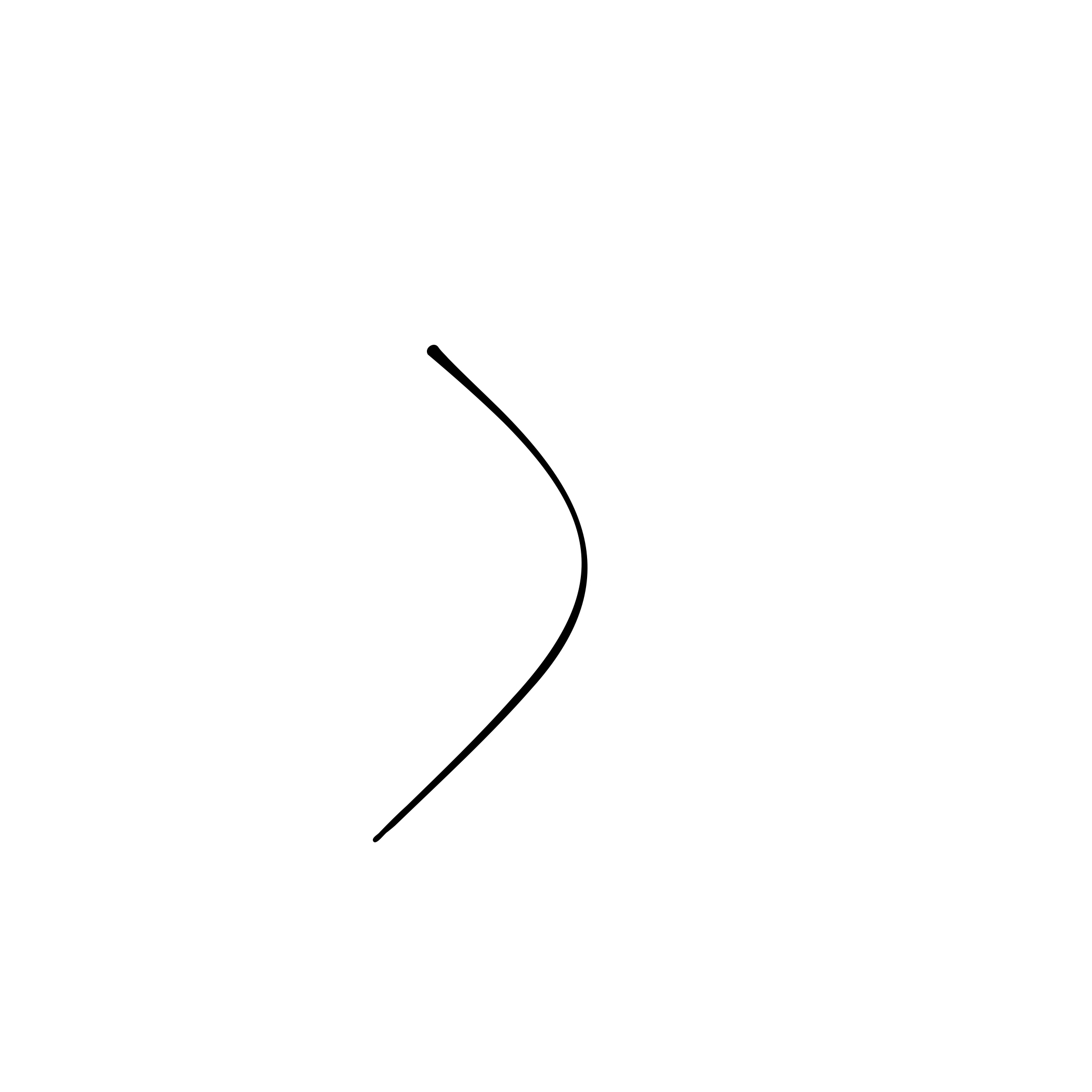}};
\node[image,right=of A] (B) {\includegraphics[scale=.2]{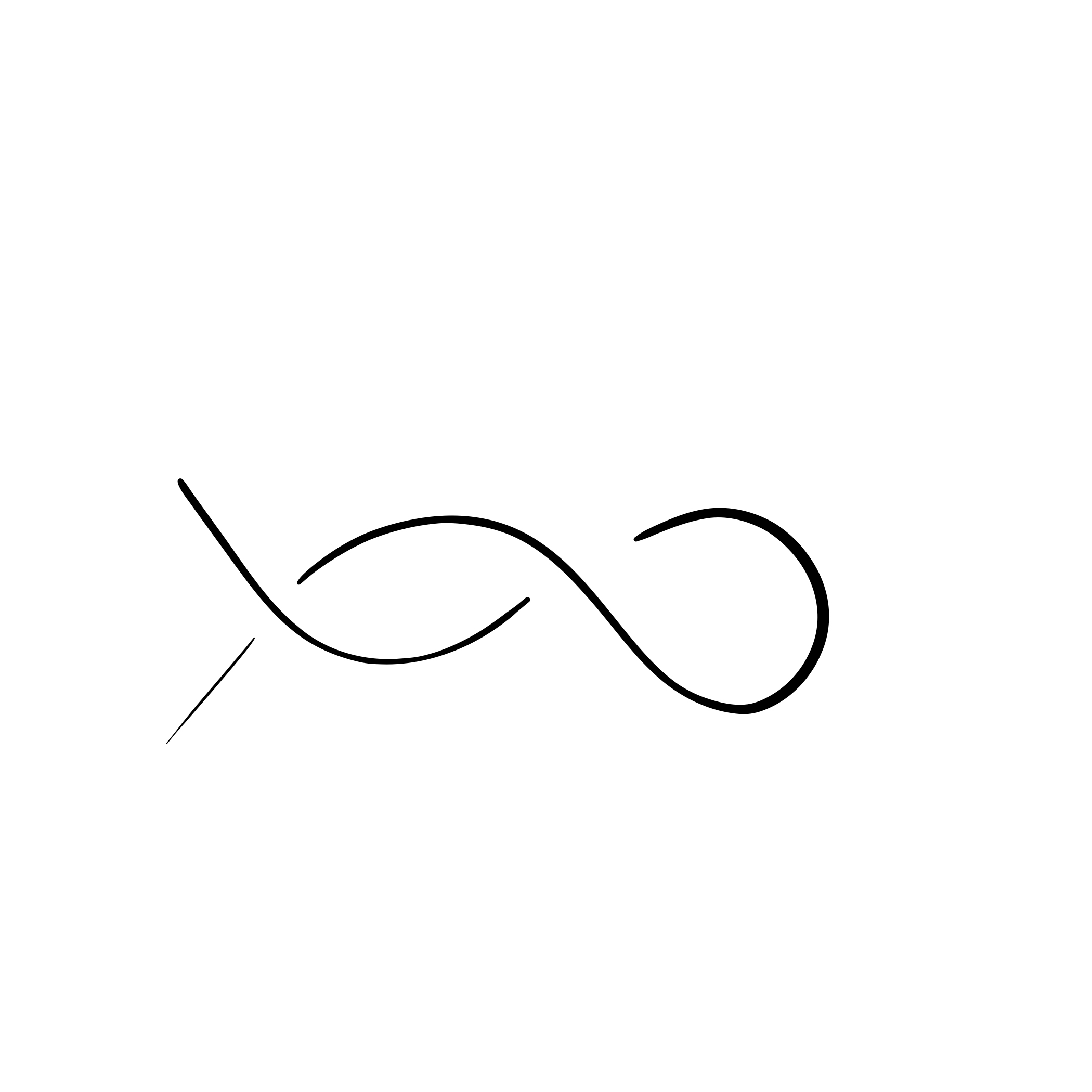}};
\draw[line width=0.30mm, ->](1.5,-.3) --(3,-.3);
\end{tikzpicture}
    \caption{Stabilization of the transverse knot $K$.}
    \label{TransStab}
\end{figure}

A Legendrian knot $L$ can be perturbed to a canonical (up to transverse isotopy) transverse link $K$ called the \textit{transverse pushoff}. Legendrian isotopic links give rise to transversely isotopic push-offs. Also, any transverse link $K$ will be transversely isotopic to the push-off of a Legendrian link $L$. We say that $L$ is a \textit{Legendrian approximation} of $K$. The transverse push-off of a Legendrian link is transversely isotopic to the push-off of its negative stabilization. On the other hand, any two Legendrian approximations of a transverse link are Legendrian isotopic after a series of negative Legendrian stabilizations. For a complete description of the relationship between transverse and Legendrian knots, see \cite{Ge}.

\section{Classical Invariants of Legendrian and Transverse Knots}\label{Classical}

In this section, we first introduce the classical invariants for Legendrian knots and then the classical invariant of transverse knots. For a more complete description of the classical invariants of Legendrian and transverse knots, the reader is referred to \cite{Et1,Ge}.

Let $K$ be a homologically trivial Legendrian knot in a contact 3-manifold $(M, \xi)$. The \textit{Thurston-Bennequin invariant} of $K$, denoted by $\mathrm{tb}(K)$, is the twisting of the contact framing relative to the surface framing of $K$, with right-handed twists being counted positively. In other words, if we choose a vector field along $K$ transverse to $\xi$ and define a parallel knot $K'$ by pushing $K$ along this vector field, then $\mathrm{tb}(K)$ equals the linking number of $\mathrm{lk}(K,K')$ of $K$ with $K'$. In Section~\ref{Proof}, we will use the Thurston-Bennequin invariant in the proof of the main result of this article. We can compute the Thurton-Bennequin invariant given a front projection, $\Pi(K)$, of the Legendrian knot $K$. Thus, the Thurston-Bennequin invariant is the following: 
\[ tb(K) = \textup{writhe}(\Pi(K)) - \dfrac{1}{2}(\textup{number of cusps in } \Pi(K)).\]
Legendrian knots also come equipped with the \emph{rotation number}. To see an interpretation of this invariant, the reader is referred to \cite{Et1}. Similar to the Thurston-Bennequin invariant, we can compute the rotation number with the front projection of the Legendrian knot $K$ using the following formula: 
\[ r(K) = \frac{1}{2}(D - U),\]
where $U$ is the number of up cusps in the front projection of $K$, and $D$ is the number of cusps in the front projection of $K$.

\section{Transverse Knots and Braids}\label{Braids}
In this section, we will explore the relationship between transverse knots and braids with the aim of thinking of a transverse knot as the closure of a braid. For a more complete description of the relationship between transverse knots and braids, we refer the reader to \cite{BVV, Et1, HaKaPl, Pl}. 

To explore the connection between transverse knots and closed braids, we will consider $(S^3, \xi_{std})$. Bennequin in \cite{Be}, proved the following:

\begin{theorem}
Any transverse knot in $(S^3, \xi_{std})$ is transversely isotopic to a closed braid. 
\end{theorem}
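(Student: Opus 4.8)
The plan is to follow the classical Alexander-style argument adapted to the contact setting, working in the symmetric model $(\mathbb{R}^3,\xi_{sym})$ of Example~\ref{rotcon}, which is contactomorphic to $(S^3\setminus\{pt\},\xi_{std})$ and in which the $z$-axis together with the coorientation by $\partial_z$ gives a natural open-book/braid structure whose pages are the half-planes $\{\theta=\mathrm{const}\}$ and whose binding is the $z$-axis. A knot is a closed braid with respect to this fibration precisely when it is everywhere transverse to the pages, i.e.\ when $d\theta>0$ along the knot. The first step is to observe that for the contact form $\alpha=dz+r^2\,d\theta$, the contact planes become ``more and more vertical'' as $r\to\infty$, so that the condition of being positively transverse to $\xi_{sym}$ is closely linked to the condition $d\theta>0$: concretely, along a transverse knot $K$ one has $\alpha(\dot\gamma)=\dot z+r^2\dot\theta>0$, and after a $C^0$-small isotopy pushing $K$ outward (increasing $r$) while controlling $\dot z$, one can arrange $\dot\theta>0$ without destroying transversality.

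The key steps, in order, are: (1) put the given transverse knot $K$ in a generic position with respect to the fibration $\theta:\mathbb{R}^3\setminus(z\text{-axis})\to S^1$, so that $\dot\theta$ vanishes only at finitely many points and $K$ misses the $z$-axis; (2) identify the ``bad arcs'' where $\dot\theta\le 0$; (3) eliminate each bad arc by a \emph{transverse Alexander move} — replacing the bad arc with an arc that runs the ``long way around'' the $z$-axis, taken far out in the region of large $r$ where the contact planes are nearly vertical so that the replacement arc can be made transverse to $\xi_{sym}$; and (4) check that this replacement can be performed through an ambient \emph{transverse} isotopy (equivalently, that the trace of the move is a transverse isotopy rel endpoints), using the standard fact that two transverse arcs with the same endpoints and the same framing data are transversely isotopic rel endpoints when they cobound an embedded disk on which the contact condition can be controlled. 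After finitely many such moves $K$ is everywhere positively transverse to the pages, hence a closed braid about the $z$-axis, and passing back to $S^3$ completes the proof.

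The main obstacle is step (3)–(4): one must verify that the Alexander-move replacement arc can simultaneously be (a) embedded and disjoint from the rest of $K$, (b) transverse to all the pages, and (c) transverse to $\xi_{sym}$, and that the whole modification is realized by a transverse \emph{isotopy} rather than merely a transverse \emph{homotopy} through immersions. The cleanest way I would handle this is to do the move in a Darboux ball placed far from the $z$-axis where $\xi_{sym}$ is $C^\infty$-close to a vertical foliation; there the braiding move is literally the same as the topological Alexander trick, and transversality is automatic because the relevant arcs have nearly-vertical tangent directions while the contact planes are nearly vertical too. An alternative, and perhaps more in the spirit of the rest of this article, is to appeal to the Legendrian picture: take a Legendrian approximation $L$ of $K$, put its front projection in a rectangular/grid form, and read off a braid directly, then note that the transverse push-off of this Legendrian grid diagram is transversely isotopic to $K$ and is manifestly a closed braid. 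I expect to present the argument via the symmetric model as the primary route and remark on the Legendrian alternative, citing \cite{BVV, Et1} for the technical transversality lemmas.
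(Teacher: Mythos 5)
The paper itself offers no proof of this statement: it is quoted as Bennequin's theorem with a citation to \cite{Be}. So your sketch is measured against the classical argument, and in outline you have reproduced it faithfully: work in the symmetric model $(\mathbb{R}^3,\xi_{sym})$ of Example~\ref{rotcon}, observe that being a closed braid about the $z$-axis means $\dot\theta>0$, and eliminate the arcs where $\dot\theta\le 0$ by transverse Alexander moves that throw them the long way around the axis, far from it. That is the right (and essentially the only) route.

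There is, however, a genuine gap in how you justify the crucial steps (3)--(4), and the lemma that actually powers the argument never appears. At large $r$ the contact planes $\ker(dz+r^2\,d\theta)$ contain $\partial_r$ and a vector close to $\mp\partial_z$, i.e.\ they become nearly \emph{tangent to the pages} $\{\theta=\mathrm{const}\}$. Consequently your claim that ``transversality is automatic because the relevant arcs have nearly-vertical tangent directions while the contact planes are nearly vertical too'' is backwards: a nearly vertical arc is nearly tangent to such a plane, and an arc with $\dot z>0$ but even slightly negative $\dot\theta$ violates $\dot z+r^2\dot\theta>0$ as soon as $r$ is large. What makes transversality cheap far from the axis is positive $\dot\theta$, because then the term $r^2\dot\theta$ dominates whatever $\dot z$ does; near-verticality of the arc is irrelevant. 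The fact you actually need, and do not state, is Bennequin's observation that on a bad arc, where $\dot\theta\le 0$, the transversality condition forces $\dot z>-r^2\dot\theta\ge 0$, so $\dot z>0$: every bad arc is a graph over the $z$-coordinate. This monotonicity is what allows the bad arc to be replaced, rel endpoints and through transverse arcs, by one that wraps positively around the axis (the wrapping is done where $r$ is large, where $r^2\dot\theta>0$ carries the transversality, while the remaining pieces keep $\dot z>0$), and it is what guarantees a transverse \emph{isotopy} rather than a homotopy. Without it, the Darboux-ball framing does not rescue the step: the replacement arc must leave any small ball in order to run around the axis, so no purely local model suffices, and the bookkeeping during the swing is exactly the content of \cite{Be} (see also the write-ups you cite). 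Relatedly, the assertion in your first paragraph that a $C^0$-small outward isotopy ``arranges $\dot\theta>0$'' cannot be correct as stated, since the winding of $\theta$ along $K$ is changed only by the non-small Alexander moves; it is harmless only because steps (1)--(4) supersede it. Your proposed Legendrian alternative (braid read off from a rectangular front, then transverse push-off) is a legitimate second route, but it too needs a precise argument that the push-off of the modified front is transversely isotopic to $K$, which again comes down to the same kind of lemma.
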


Recall that the usual Markov Theorem states a sequence of conjugations and braid stabilizations relates two braids representing the same knot or link. Orevkov and Shevchishin \cite{OrSh} and independently 
Wrinkle \cite{Wr}, proved the following transverse Markov theorem.

\begin{theorem}
Two braids represent the same transverse knot if and only if they are related by positive stabilization and conjugation in the braid group. 
\end{theorem}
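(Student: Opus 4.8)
The plan is to prove the two implications separately, the first being elementary and the second carrying all the content. For the ``if'' direction one shows directly that conjugation and positive stabilization produce transversely isotopic closures. Work in the model underlying Bennequin's theorem: a closed braid $\widehat{\beta}$ with $\beta\in B_n$ sits near the binding $\{r=0\}$ of the standard open book for $(S^3,\xi_{std})$, with pages the half-planes $\theta=\mathrm{const}$, and is made transverse to $\xi_{std}$ by tilting it slightly in the direction of increasing $\theta$ along each page. Conjugating $\beta$ by $\gamma\in B_n$ is realized by an ambient isotopy supported in a solid-torus neighborhood of the binding that is through braided (hence transverse) knots, so the closures are transversely isotopic. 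For a positive Markov stabilization $\beta\mapsto \beta\sigma_n\in B_{n+1}$, one writes down the explicit local finger move that adds one more positively oriented loop around the binding and checks against $\alpha=dz-y\,dx$ (in its open-book form) that the move is through transverse knots. Along the way record that $\self(\widehat{\beta})=e(\beta)-n$, where $e$ is the exponent sum, so that positive stabilization and conjugation preserve $\self$, consistent with $\self$ being a transverse invariant, whereas a \emph{negative} stabilization would drop $e-n$ by $2$ and is therefore \emph{not} a transverse isotopy.

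For the ``only if'' direction, let $K_t$, $t\in[0,1]$, be a transverse isotopy with $K_0=\widehat{\beta_1}$ and $K_1=\widehat{\beta_2}$. Forgetting transversality and applying Bennequin's theorem together with the classical Markov theorem, one obtains a finite tower of braids $\beta_1=\gamma_0,\gamma_1,\dots,\gamma_m=\beta_2$, consecutive ones related by conjugation or by a (positive or negative) Markov (de)stabilization, with successive closures smoothly isotopic. The goal is to replace this tower by one avoiding all negative stabilizations. The two inputs are: (a) $\self$ is a transverse invariant, so $e(\beta_1)-n_1=e(\beta_2)-n_2$, which constrains how many negative moves can occur and forces them to be paired with positive ones; and (b) a structural reorganization of the smooth Markov tower, in the spirit of the Birman--Menasco ``Markov Theorem Without Stabilization,'' that normalizes where the destabilizations sit and makes them compatible with transverse realizability. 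Combining these, each negative stabilization is traded, at the cost of additional positive stabilizations and conjugations, for steps that the transverse isotopy actually performs, yielding the desired tower.

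\textbf{Main obstacle.} The genuine difficulty is bridging ``smoothly isotopic'' and ``transversely isotopic'' inside the tower: one must certify that the intermediate braids produced by the reorganization are genuinely visited by an honest transverse isotopy, which is precisely what a negative destabilization would violate. This is where the two known proofs diverge and do the real work. I would present the braid-foliation route (Wrinkle) as the more self-contained one: put the trace of $\{K_t\}$ in general position with respect to the disk fibration of the open book, analyze the resulting braid foliation, and show the only singularities forced on it as $t$ varies correspond to braid isotopy (conjugation) and positive (de)stabilization; the theorem then follows by induction on the number of non-generic times. The alternative (Orevkov--Shevchishin) replaces the combinatorial analysis by a study of the characteristic foliations swept out by the isotopy via pseudoholomorphic-curve and symplectic-cobordism techniques, which rules out the negative moves directly. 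For the purposes of this article it is enough to isolate and quote the single lemma that a transverse isotopy between closed braids, in general position with respect to the open book, factors through positive (de)stabilizations and conjugations, and to deduce the statement from it.
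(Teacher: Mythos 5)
The paper does not actually prove this statement: it is quoted as background, attributed to Orevkov--Shevchishin and, independently, Wrinkle, so your final fallback (``isolate and quote the single lemma'') ends up in the same place as the paper itself. Your sketch of the ``if'' direction is fine in outline: conjugation and positive Markov stabilization can be realized by explicit isotopies through closed braids, hence through transverse links, and the computation $\self(\widehat{\beta})=e(\beta)-n$ correctly shows why a negative stabilization is excluded.

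The genuine gap is in the mechanism you propose for the ``only if'' direction. Passing to a smooth Markov tower and then trying to purge the negative (de)stabilizations using (a) invariance of $\self$ and (b) a Birman--Menasco-style reorganization cannot work, and there is a concrete obstruction: there exist transversely non-simple knot types, i.e.\ pairs of transverse knots that are smoothly isotopic and have equal self-linking number but are \emph{not} transversely isotopic (Birman--Menasco, Etnyre--Honda). If ``smoothly isotopic plus equal $\self$'' could always be upgraded, by rearranging a smooth Markov tower, to a tower of conjugations and positive (de)stabilizations, then by the easy direction every such pair would be transversely isotopic, contradicting those examples. Equivalently, the condition $e(\beta_1)-n_1=e(\beta_2)-n_2$ is a single end-to-end numerical constraint; it says nothing about the intermediate braids and does not ``pair off'' negative moves. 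Any correct proof must use the transverse isotopy $K_t$ itself throughout, not just its smooth shadow and its $\self$: this is exactly what Wrinkle's braid-foliation analysis of the trace of the isotopy (or Orevkov--Shevchishin's analytic argument) accomplishes, and it is the content your proposal defers to by citation rather than proves. So as written the proposal is an honest outline, but its middle step is not a proof and cannot be repaired along the lines indicated; either reproduce the braid-foliation argument in detail or, as the paper does, cite the theorem.
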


\begin{remark}
Let $K$ be a transverse knot. The stabilization of $K$, denoted by $K_{stab}$, represented by a braid, is equivalent to the negative braid stabilization, which adds an extra strand and a negative kink to the braid.
 Furthermore, the positive braid stabilization does not change the transverse type of the knot or link. 
\end{remark}

\section{$n$-fold Cyclic Branched Covers} \label{Branched}
This section is to introduce the basic definitions and constructions of branched covers of contact 3-manifolds. For a more detailed description of the theory of branched covers of 3-manifolds, the reader is referred to \cite{Ro}, and for a complete description of branched covers and contact 3-manifolds, the reader is referred to \cite{Ca, HaKaPl}.
\begin{definition}
Let $M$ and $N$ be 3-manifolds. A map $p: M \rightarrow N$ is called a branched covering if there exists 
a dimension 1 complex $K$ such that $p^{-1}(K)$ is a dimension 1 complex and $p|_{M-p^{-1}(K)}$ is a 
covering. 
\end{definition}
One can think of a branched covering as a map between manifolds such that, away from the \textit{branch locus}, a codimension 2 set, $p$ is a covering.

Now, we will describe branched covers of contact manifolds. Let $K$ be a transverse knot in $(Y, \xi)$ and branched covering $p : \tilde{Y}:=\Sigma_n(K) \rightarrow Y$ with branch locus $K$. Now the question becomes, how do we define the lift of $\xi$? First, the manifold $\Sigma_n (K)$ is constructed as usual. Let $\widetilde{K} = p^{-1}(K)$. Then $p : (\Sigma_n (K)-\widetilde{K}) \rightarrow Y - K$ is a true cover, and thus $\tilde{\xi} = p^*(\xi)$ on $(\Sigma_n(K) - \widetilde{K})$. Gonzalo in \cite{Go} extended the contact structure by carefully analyzing the branched covering map near the branch locus $K$ over $Y$. Therefore, we obtain a contact structure $\xi_n$ on $\Sigma_n(K)$, and we will denote the contact manifold obtained from this construction by $(\Sigma_n(K), \xi_n)$.

\section{Stabilization result}\label{Proof}

In this section, we reprove a result initially demonstrated by Harvey, Kawamuro, and Plamenvskaya in \cite{HaKaPl}. While the original argument relied on contact surgery, we will explicitly construct the overtwisted disk. Additionally, by constructing the overtwisted disk, we show that it is contained in the complement of the branch locus of the $n$-fold cyclic branched cover.

\begin{theorem}\label{stab}
If $K_{stab}$ is a negative braid stabilization of $K$, then $(\Sigma_n(K_{stab})-\widetilde{K_{stab}}, \xi_n)$ is overtwisted.   
\end{theorem}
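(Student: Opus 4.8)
The plan is to build the overtwisted disk explicitly from the local model of a negative braid stabilization, following the Gompf-style construction referenced in the introduction. First I would localize the picture: a negative braid stabilization of $K$ introduces one new strand together with a negative kink, so there is a ball $B \subset S^3$ meeting $K_{stab}$ in a standard tangle (two parallel strands joined by a clasp with one negative crossing) such that outside $B$ the knot $K_{stab}$ agrees with $K$. Since the $n$-fold cyclic branched cover is determined by the branch locus, $p^{-1}(B)$ will be $n$ copies of $B$ glued cyclically along the portion of their boundary lying over $\partial B \setminus K$, and the lift of $\xi_{std}$ over this region is completely understood — it is simply the pullback contact structure on the honest (unbranched) cover of $B \setminus K_{stab}$. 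The key observation is that the disk will be constructed entirely inside $p^{-1}(B \setminus \widetilde{K_{stab}})$, which is why the conclusion can be strengthened to say it lies in the complement of the branch locus.

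The central step is to produce, inside the $n$-fold cyclic cover of the stabilization tangle complement, an embedded disk $D$ whose boundary is Legendrian-flat with respect to $\tilde\xi_n$, i.e.\ $T_p D = (\tilde\xi_n)_p$ for all $p \in \partial D$. Here I would exploit the negative kink: in the standard front projection the negative stabilization arc, when pushed off and followed around through the $n$ sheets of the cover, has total Thurston--Bennequin-type twisting that forces a "half-twisted band" region in which the contact planes rotate by a full turn relative to a spanning disk. Concretely, I would take a small Legendrian (or transverse-pushoff) curve $\gamma$ encircling the stabilization region, lift it to the cover, and observe that because the negative kink contributes $-1$ to the writhe while the cusps contribute the usual correction, the contact framing of the lifted boundary curve disagrees with the disk framing by exactly the amount needed to realize the overtwisted-disk condition after passing to the $n$-fold cover. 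The curve $\gamma$ bounds an obvious embedded disk in $p^{-1}(B)$ (the preimage of a disk in $B$ disjoint from the clasp), and one checks that along $\partial$ of this disk the planes $\tilde\xi_n$ are tangent. This reduces to a computation in the explicit local model $(\mathbb{R}^3, \xi_{std})$ of Example~\ref{stdcon}, using the $\tb$ formula of Section~\ref{Classical} to track the twisting.

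I expect the main obstacle to be the bookkeeping of the contact framing in the branched cover near (but not on) the branch locus: one must verify that the pullback contact structure $\tilde\xi_n = p^*(\xi_{std})$, which a priori is only defined on $\Sigma_n(K_{stab}) \setminus \widetilde{K_{stab}}$, still has enough "room" to make the candidate disk overtwisted without the disk sneaking up onto $\widetilde{K_{stab}}$ where the extended structure $\xi_n$ of Gonzalo's construction would be needed. The resolution is that the negative kink already produces the requisite $2\pi$ of twisting within a compact region bounded away from $K_{stab}$, so passing to the cover only amplifies this; no analysis of the singular model at the branch locus is required. Once the disk is exhibited in $p^{-1}(B \setminus \widetilde{K_{stab}})$, overtwistedness of $(\Sigma_n(K_{stab}) - \widetilde{K_{stab}}, \xi_n)$ — and hence, by restriction, the stated result — follows immediately from the definition of an overtwisted contact manifold.
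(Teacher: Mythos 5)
Your overall plan---localize at the negative stabilization, take a curve $\gamma$ encircling the stabilization region, track contact framing versus disk framing, and exhibit the overtwisted disk away from the branch locus---is the same in spirit as the paper's argument. But the central step, as you state it, fails. You assert that $\gamma$ bounds ``an obvious embedded disk in $p^{-1}(B)$, the preimage of a disk in $B$ disjoint from the clasp,'' and that ``the negative kink already produces the requisite twisting within a compact region bounded away from $K_{stab}$, so passing to the cover only amplifies this.'' Away from the branch locus the covering projection is a local contactomorphism, so it neither creates nor amplifies twisting: the preimage of an embedded disk disjoint from $K_{stab}$ is just $n$ disks, each with a neighborhood contactomorphic to that of the downstairs disk. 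Hence if such a disk were overtwisted upstairs, the downstairs disk would already be an overtwisted disk in $(S^3,\xi_{std})$, contradicting tightness. Concretely, the relevant curve (the paper's Legendrian unknot $\gamma$ with $\tb(\gamma)=-2$ encircling the stabilization zigzags) forms a Whitehead-type clasp with $K_{stab}$, so it bounds no embedded disk in the knot complement with the required framing; and any smaller $\gamma$ that does bound such a disk has the framing forced by tightness, so its lifts give nothing new. (Your framing bookkeeping is also off: for an overtwisted disk you need the contact framing to \emph{agree} with the disk framing along the boundary, i.e.\ zero relative twisting, which is exactly what $\tb(\gamma)=-2=\mathrm{lk}(\gamma,\gamma')$ gives in the paper, not a ``disagreement by the amount needed.'')

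What is missing is precisely the mechanism the paper borrows from Gompf, and it is the heart of the proof. Downstairs, $\gamma$ bounds only an \emph{immersed} disk $D$ disjoint from $K_{stab}$ (with a clasp self-intersection), along whose boundary the twisting of $\xi_{std}$ relative to $D$ is zero; an embedded such disk is impossible by tightness. The role of the branched cover is to resolve this immersion: the double-point loops of $D$ wind nontrivially around the branch locus, so in $\Sigma_n(K_{stab})$ the preimage of $D$ becomes $n$ \emph{embedded} disks, and since the cover is a contactomorphism off $\widetilde{K_{stab}}$ the boundary twisting is still zero. That is what produces embedded overtwisted disks in $\Sigma_n(K_{stab})-\widetilde{K_{stab}}$. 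Your write-up never confronts the fact that the candidate disk cannot simultaneously be embedded, disjoint from the knot, and correctly framed downstairs, and without that the passage to the cover does no work in your argument.
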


\begin{proof}
Given a transverse knot $K$ in $(S^3, \xi_{std})$, we can uniquely approximate $K$ by a Legendrian knot $L$ up to negative Legendrian stabilization. Given a Legendrian approximation of $K$, we can perform a negative stabilization locally and still approximate $K$; see Figure~\ref{NegStab}.
\begin{figure}[h]
    \begin{tikzpicture}[
node distance = 3mm,
image/.style = {scale=0.3,},
arr/.style = {-Triangle, semithick}
]
\node[image] (A) {\includegraphics[scale=.2]{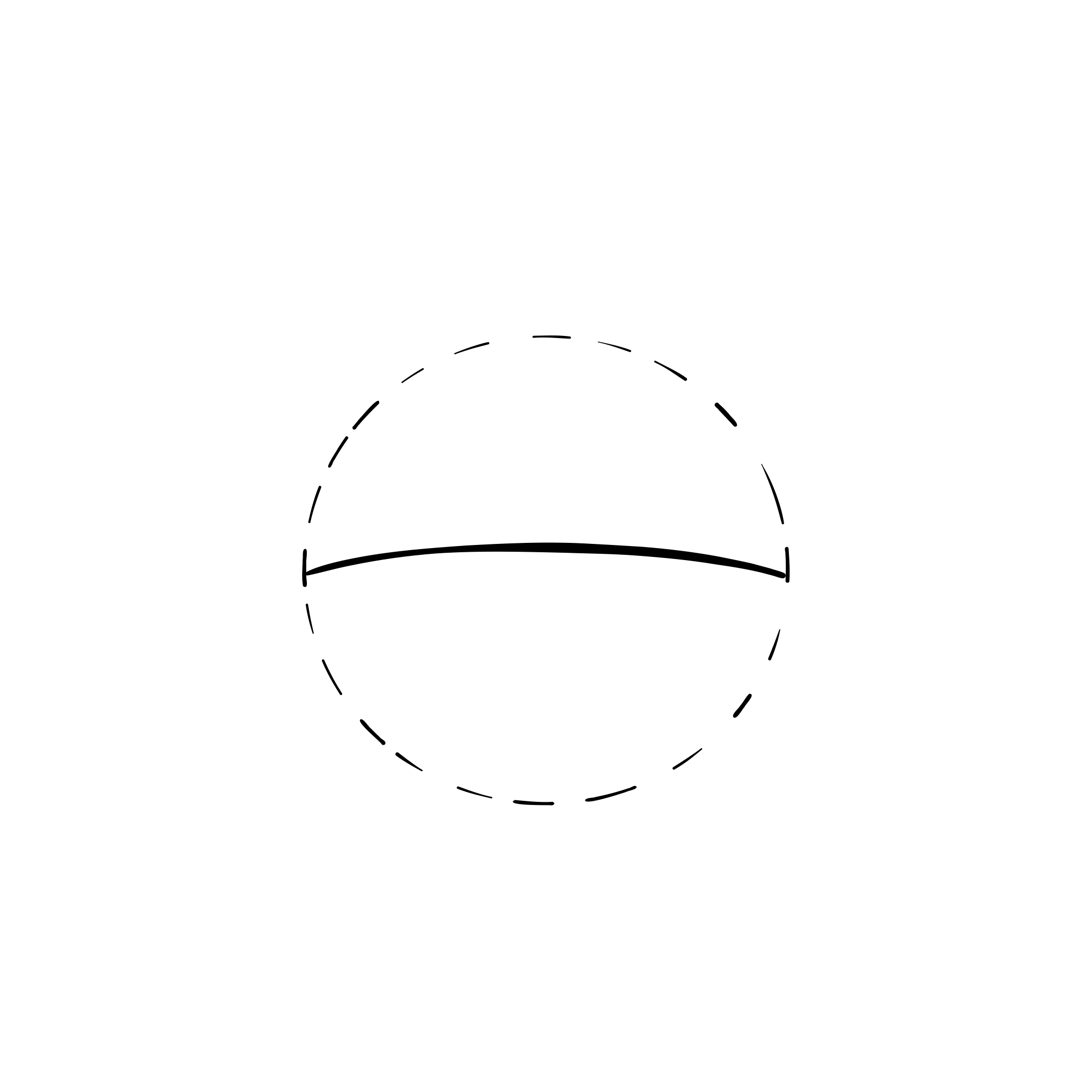}};
\node[image,right= 5mm of A] (B) {\includegraphics[scale=.2]{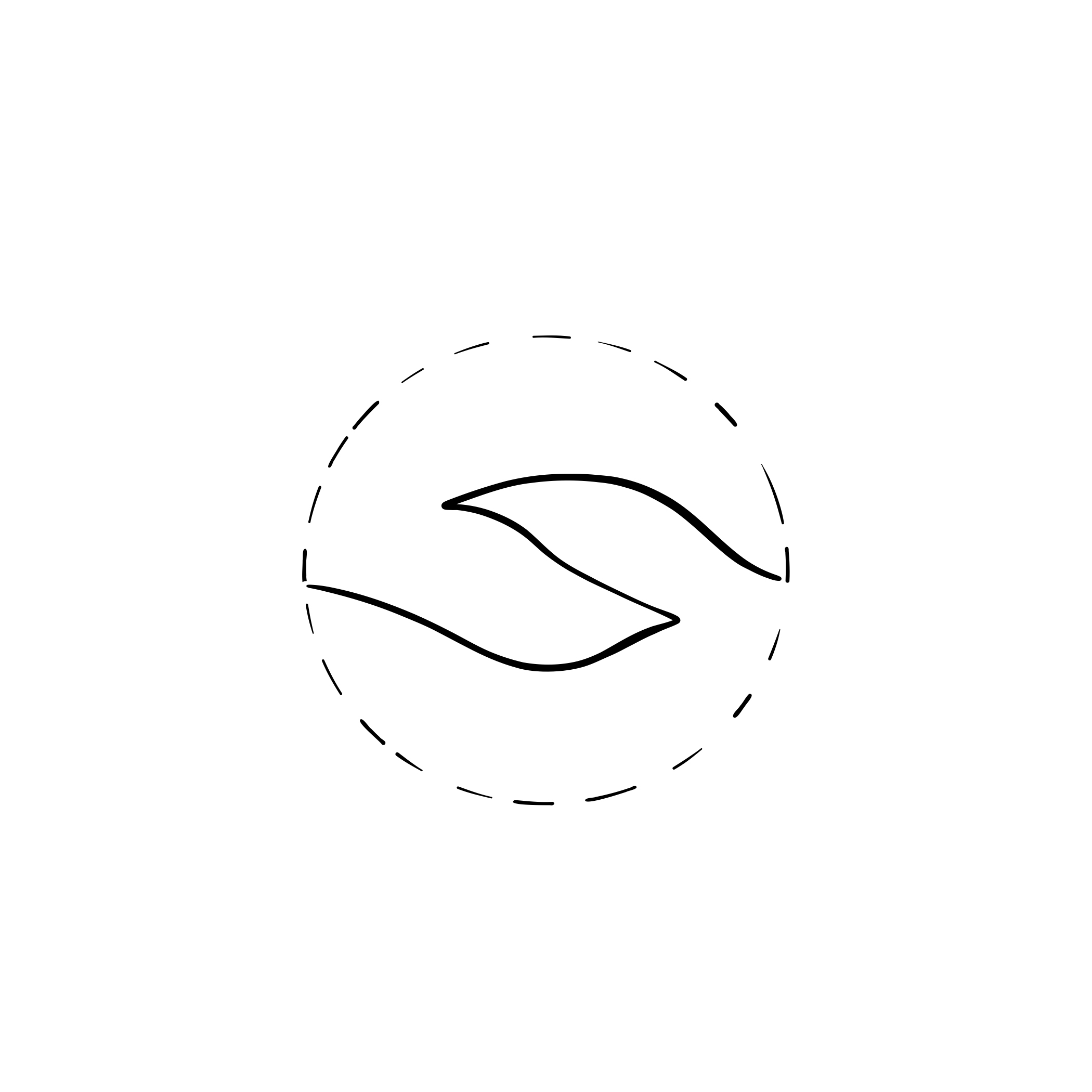}};
\draw[line width=0.30mm, ->](A) --(B);
\end{tikzpicture}
\caption{The figure on th left is the Legendrian approximation $L$ of $K$, the figure on the right is another Legendrian approximation $L'$ after performing a negative Legendrian stabilization.}
\label{NegStab}
\end{figure}
Since transverse knots do not detect negative Legendrian stabilization, both of the above Legendrian knots are Legendrian approximations of the transverse knot $K$.  Now, suppose that $K_{stab}$ is the stabilization of the transverse knot $K$. From the negatively stabilized Legendrian knot $L'$, we can also obtain a Legendrian approximation of the stabilization of $K_{stab}$. To get a Legendrian approximation of $K_{stab}$, we must perform a positive Legendrian stabilization to $L'$.


\begin{figure}
\includegraphics[scale=.05]{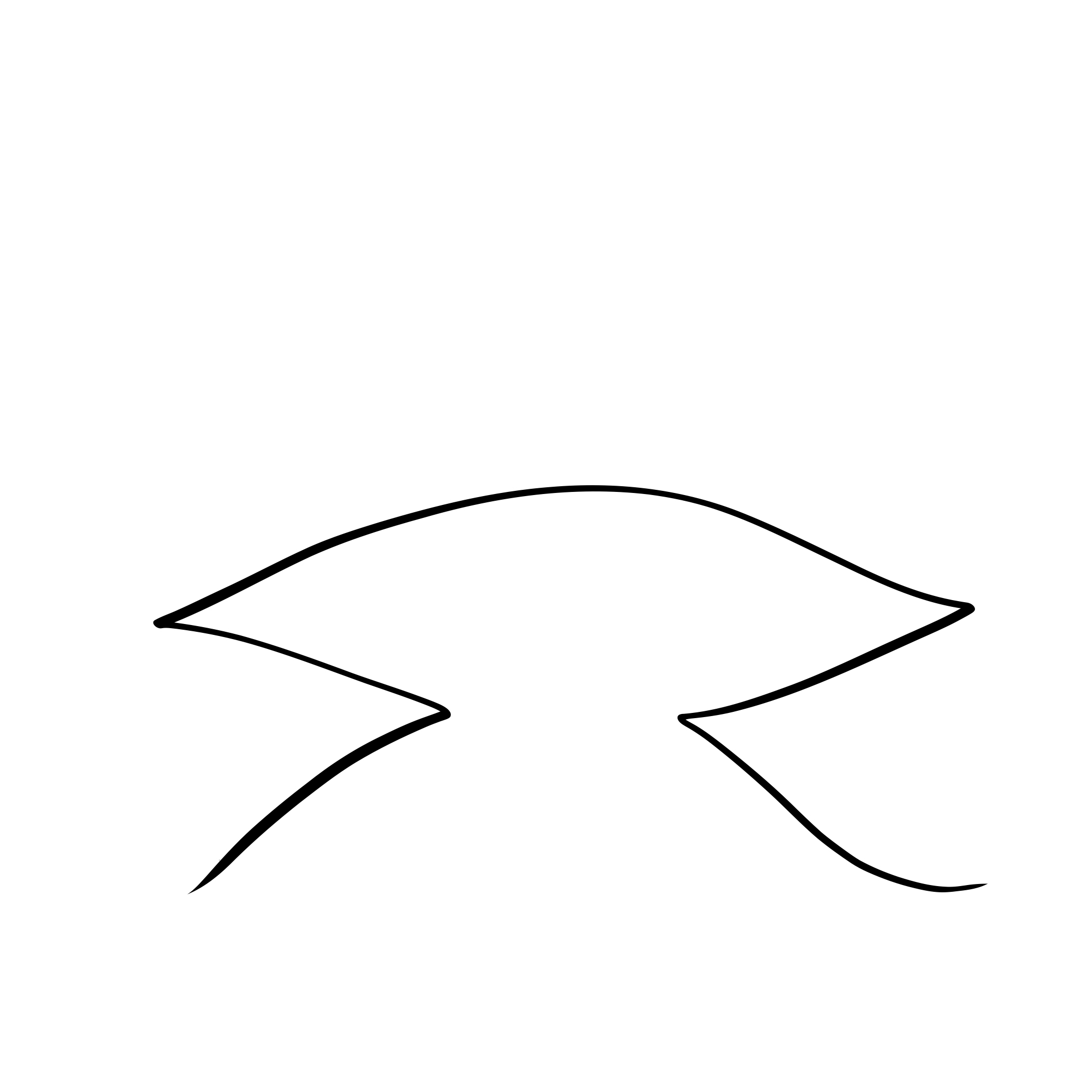}
\caption{A local picture of $L'_{+}$.}
\label{PositiveStab}
\end{figure}
We will denote the positive Legendrian stabilization of $L'$ by $L'_+$. In Figure~\ref{PositiveStab}, we see both the positive and negative Legendrian stabilizations. Consider the curve $\gamma$ in Figure~\ref{gamma}.

\begin{figure}[h]
    \begin{tikzpicture}[
node distance = 3mm,
image/.style = {scale=0.3,},
arr/.style = {-Triangle, semithick}
]
\node[image] (A) {\includegraphics[scale=.18]{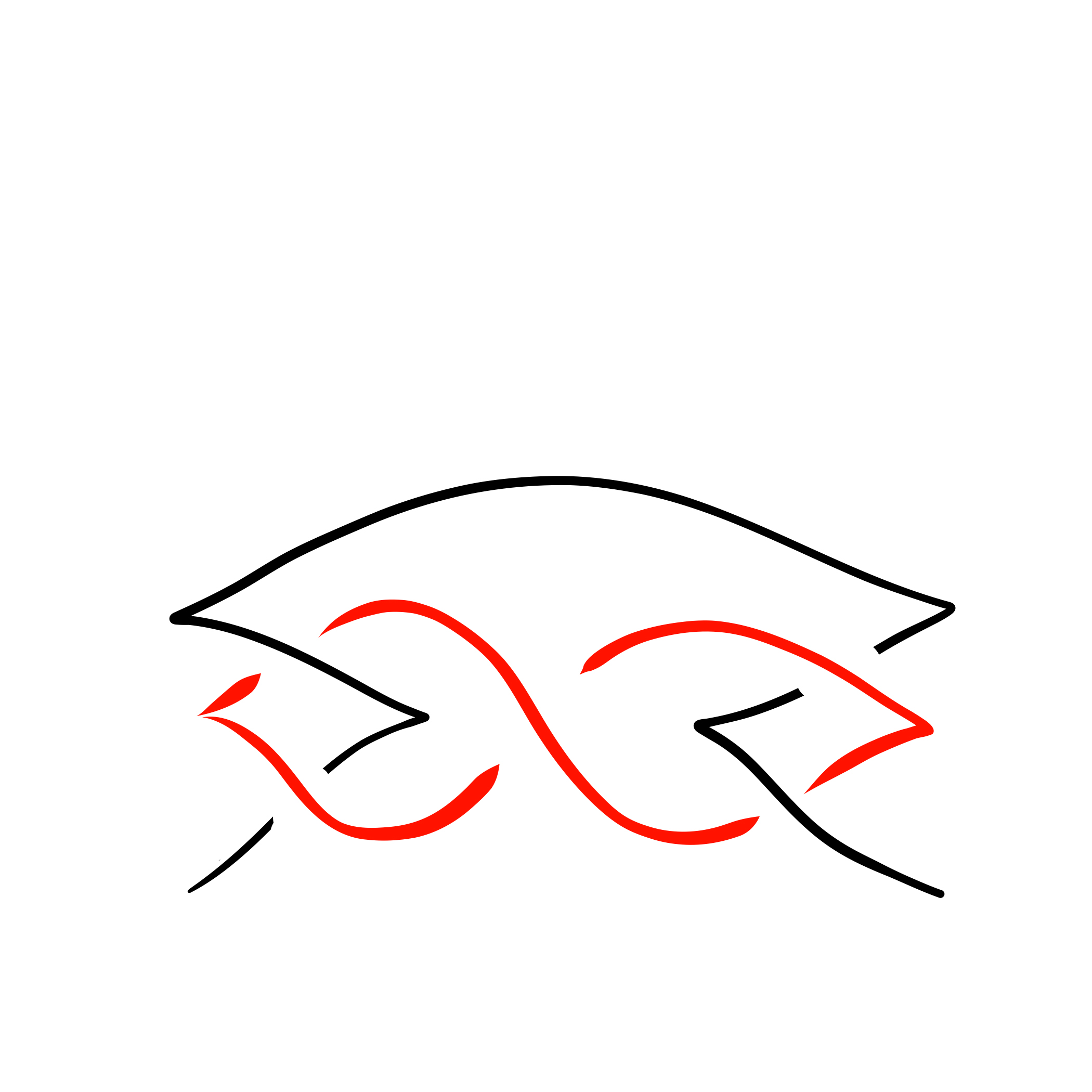}};
\node [draw=none, inner sep = 0] at (2,-.8) {\large{$\gamma$}};

\end{tikzpicture}
\caption{The curve $\gamma$.}
\label{gamma}
\end{figure}
Since we can always perform the above local operations on a Legendrian approximation of $K$. We want to think of these local pictures separately. We can think of the appropriate Legendrian unknot depicted in Figure~\ref{unknot} and then connect sum the Legendrian unknot with any Legendrian knot to get Figure~\ref{gamma}.
\begin{figure}[h]
\begin{tikzpicture}[
node distance = 3mm,
image/.style = {scale=0.3,},
arr/.style = {-Triangle, semithick}
]
\node[image] (A) {\includegraphics[scale=.2]{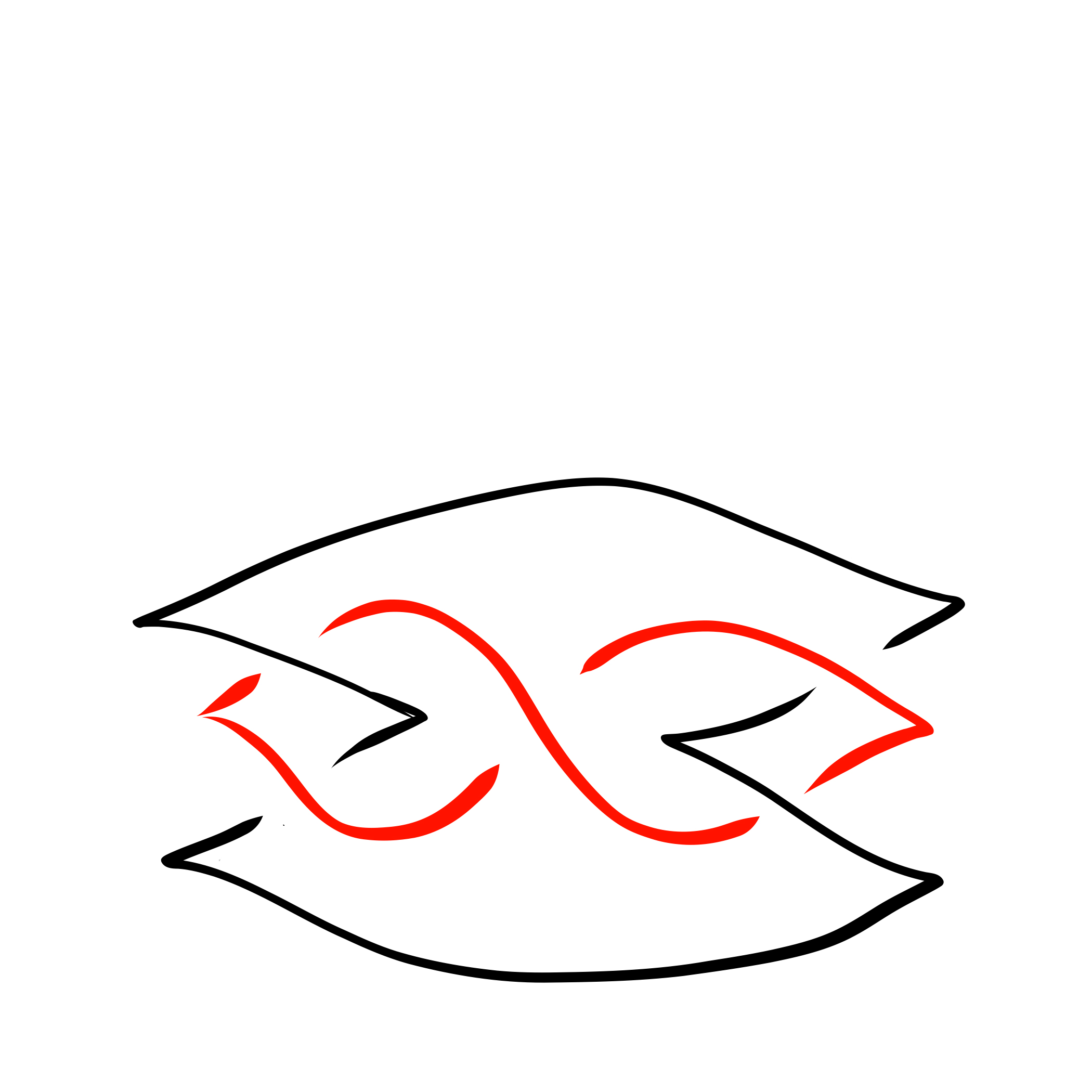}};
    \node [draw=none, inner sep = 0] at (2,-.8) {\large{$\gamma$}};
\end{tikzpicture}
\caption{Legendrian unknot and the curve $\gamma$.}
\label{unknot}
\end{figure}
Note that $\gamma$ has $\mathrm{tb}(\gamma) = -2$ in $S^3$. Topologically, we have Figure~\ref{smooth}.  
\begin{figure}[h]
\begin{tikzpicture}[
node distance = 3mm,
image/.style = {scale=0.3,},
arr/.style = {-Triangle, semithick}
]
\node[image] (A) {\includegraphics[scale=.18]{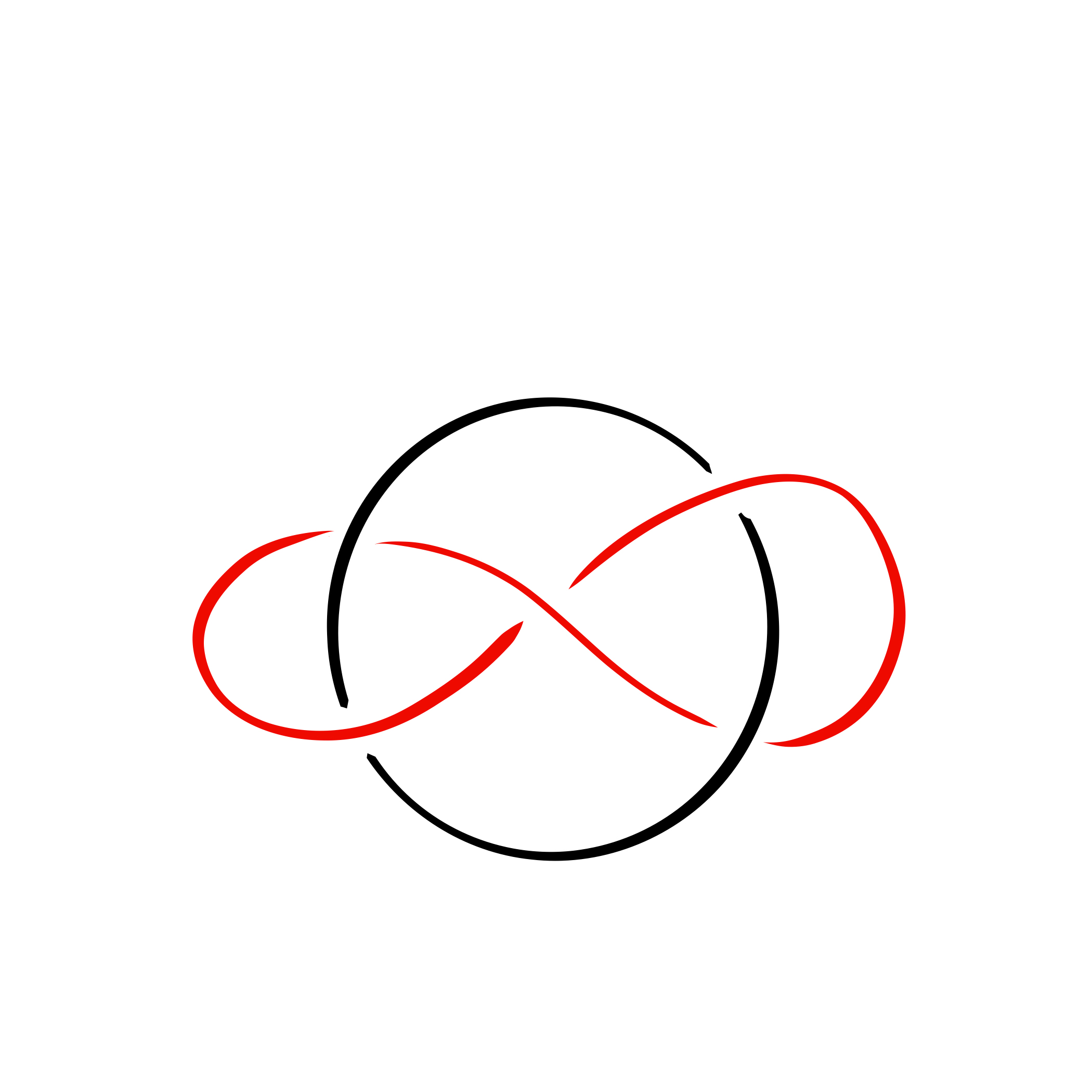}};
    \node [draw=none, inner sep = 0] at (2,-.5) {\large{$\gamma$}};
\end{tikzpicture}
\caption{The Legendrian link in Figure~\ref{unknot} as a smooth link.}
\label{smooth}
\end{figure}
Furthermore, we can isotope $\gamma$ to obtain the negative Whitehead link; see Figure \ref{whitehead}.
\begin{figure}[h]
\begin{tikzpicture}[
node distance = 3mm,
image/.style = {scale=0.3,},
arr/.style = {-Triangle, semithick}
]
\node[image] (A) {\includegraphics[scale=.2]{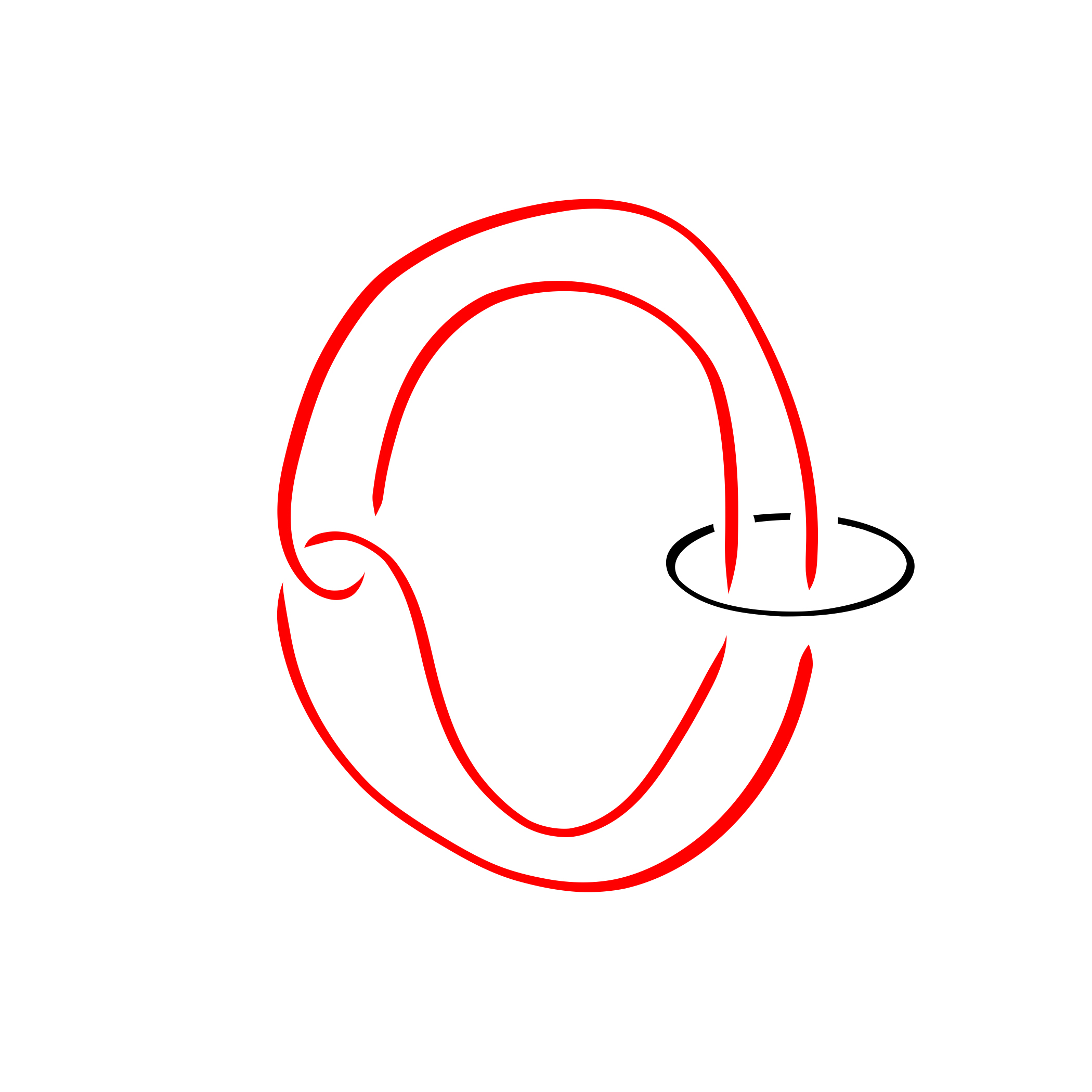}};
    \node [draw=none, inner sep = 0] at (-1.5,0) {\large{$\gamma$}};
\end{tikzpicture}
\caption{Diagram of the (negative) Whitehead link.}
\label{whitehead}
\end{figure}
We see that $\gamma$ bounds a disk $D$ depicted in the following figure by the gray region. We have $lk(\gamma,\gamma')=-2$ where $\gamma'$ is the push off of $\gamma$ obtained by sliding $\gamma$ into the interior of the disk $D$. Hence, the twisting of $D$ with respect to the contact planes is zero. Therefore, if $D$ were to be embedded, it would be an overtwisted disk. This is impossible, so this means that $\gamma$ bounds an immersed disk $D$ disjoint from the link; see Figure~\ref{immersed}. 
\begin{figure}[h]
\begin{tikzpicture}[
node distance = 3mm,
image/.style = {scale=0.3,},
arr/.style = {-Triangle, semithick}
]
\node[image] (A) {\includegraphics[scale=.2]{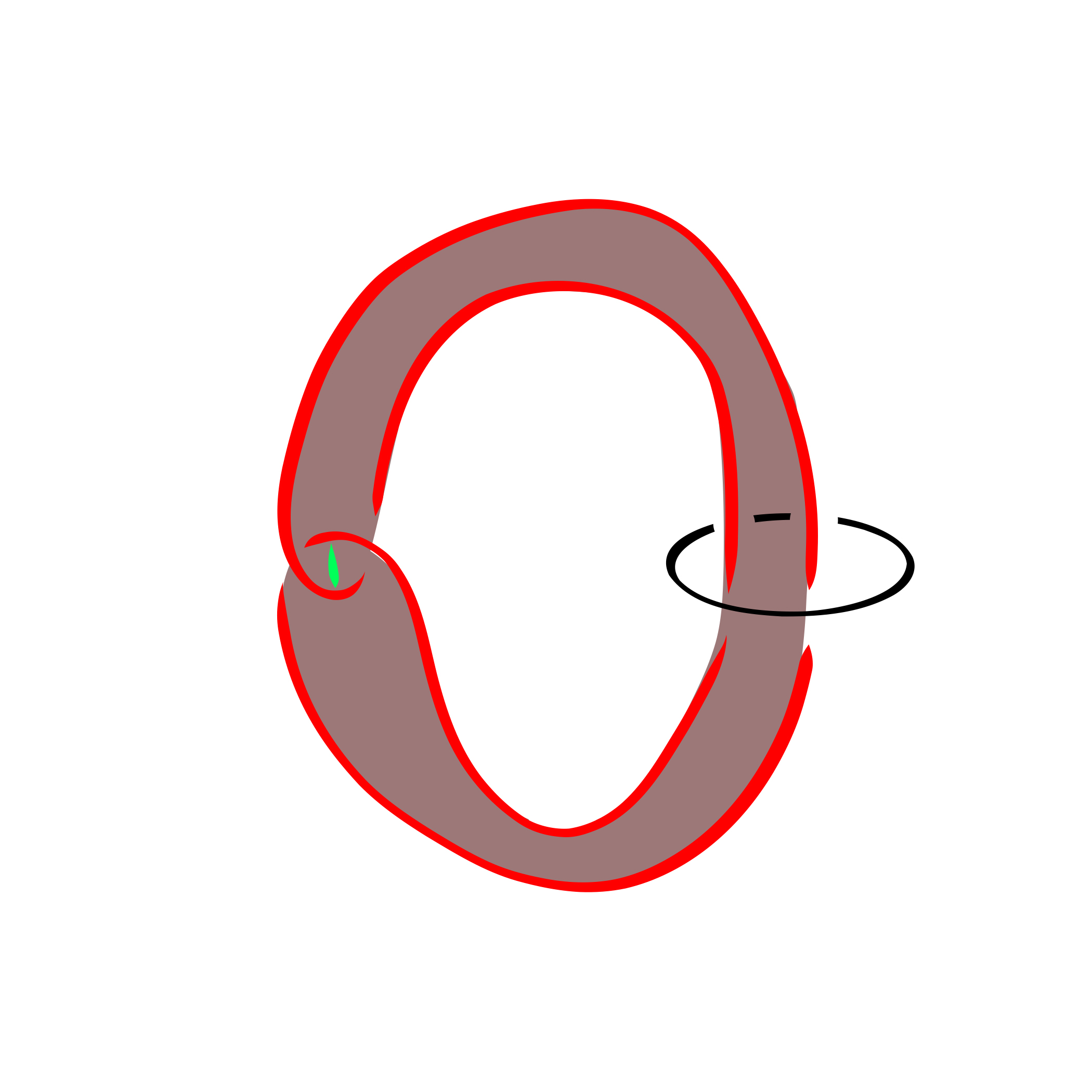}};
    \node [draw=none, inner sep = 0] at (-1.5,0) {\large{$\gamma$}};
        \node [draw=none, inner sep = 0] at (-.7,-.5) {{\color{white}$D$}};
\end{tikzpicture}
\caption{Immersed disk $D$ bounded by $\gamma$.}
\label{immersed}
\end{figure}
But if we consider the double branch cover $\pi : \Sigma_2(K_{stab}) \rightarrow S^3$ the disk $D$ lifts to two embedded, overtwisted disks in $\Sigma_2(K_{stab})$, see Figure~\ref{lift}. 
\begin{figure}[h]
\includegraphics[scale=.06]{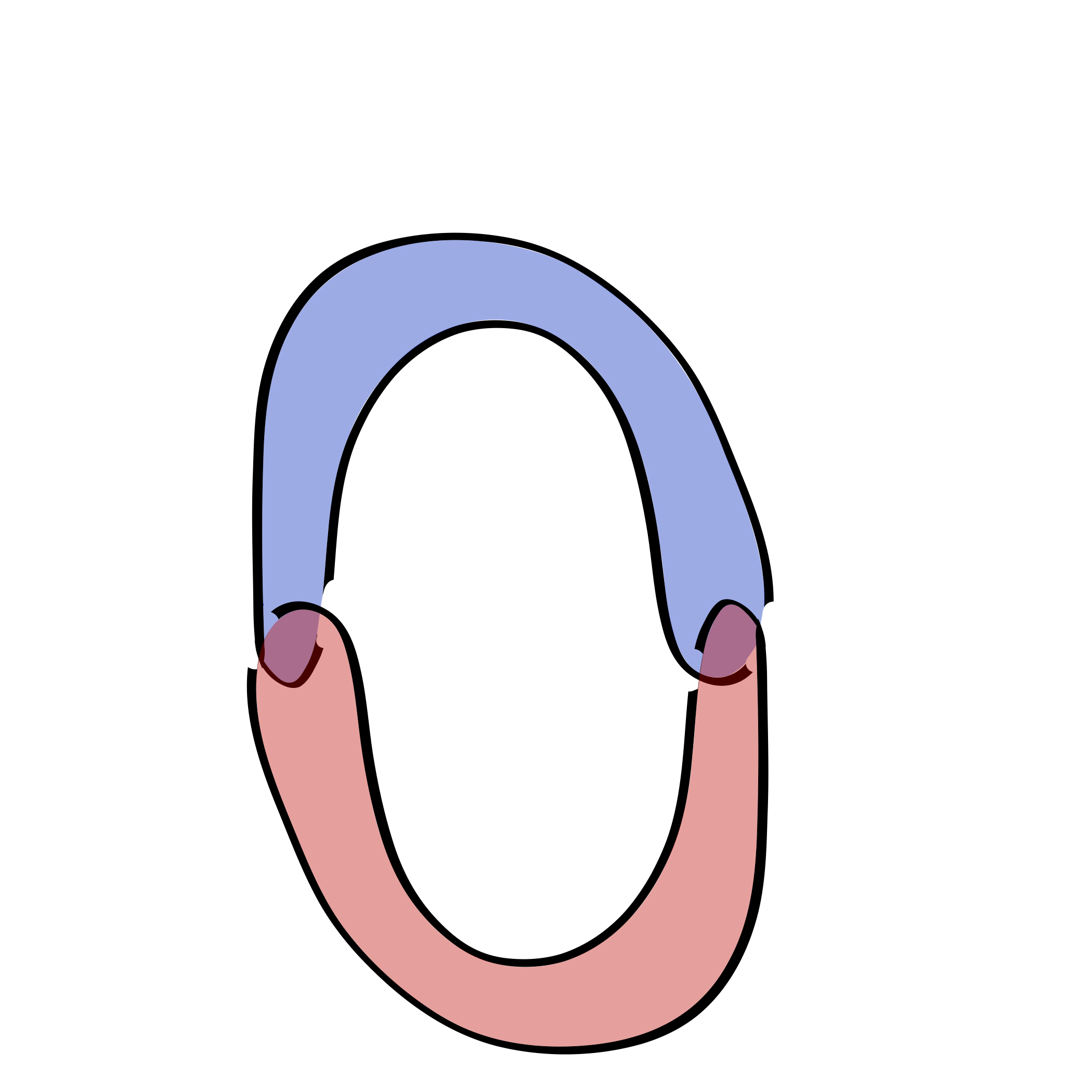}
\caption{Lift of $D$ in $\Sigma_2(K_{stab})$.}
\label{lift}
\end{figure}
If we consider the $n$-fold cyclic branch cover branched along $K_{stab}$, we will obtain $n$ embedded disks with all of them being overtwisted. Therefore, any $n$-fold cyclic branch cover with branched locus $K_{stab}$ is overtwisted.  
\end{proof}

\begin{corollary}
If $K_{stab}$ is a negative braid stabilization of $K$, then $(\Sigma_n(K_{stab}), \xi_n)$ is overtwisted.   
\end{corollary}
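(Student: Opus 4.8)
The plan is to read this off directly from Theorem~\ref{stab}. That theorem constructs, inside $\Sigma_n(K_{stab}) - \widetilde{K_{stab}}$, an embedded disk $D$ whose boundary is everywhere tangent to $\xi_n$; i.e.\ $D$ is already an overtwisted disk for the contact structure on the complement of the branch locus. The only thing left to observe is that such a disk is equally an overtwisted disk in the \emph{closed} manifold $(\Sigma_n(K_{stab}), \xi_n)$, so that overtwistedness follows by definition.

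To make this precise I would argue as follows. Recall from Section~\ref{Branched} that $\xi_n$ is, by construction, equal to the pulled-back contact structure $p^*(\xi_{std})$ on $\Sigma_n(K_{stab}) - \widetilde{K_{stab}}$ and is extended across $\widetilde{K_{stab}}$ by Gonzalo's analysis near the branch locus. Thus the inclusion $\iota : \Sigma_n(K_{stab}) - \widetilde{K_{stab}} \hookrightarrow \Sigma_n(K_{stab})$ sends the contact structure on the complement to the restriction of $\xi_n$, and it is the identity on a neighborhood of $D$, since $D$ is disjoint from the codimension-two set $\widetilde{K_{stab}}$. Hence $\iota(D)$ is an embedded disk in $\Sigma_n(K_{stab})$ and the tangency condition $T_p\big(\iota(D)\big) = (\xi_n)_p$ for all $p \in \partial\big(\iota(D)\big)$ passes over verbatim, because overtwistedness of a disk is a local condition along its boundary. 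By the definition of an overtwisted contact manifold, $(\Sigma_n(K_{stab}), \xi_n)$ is therefore overtwisted.

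I do not expect any real obstacle here: the entire content sits in Theorem~\ref{stab}, whose proof was organized precisely so that the overtwisted disk avoids the branch locus. The one point worth stating explicitly is that no perturbation or smoothing near $\widetilde{K_{stab}}$ is required — $D$ is honestly embedded in the complement, hence in the closed cover, with the tangency property intact — so the passage from Theorem~\ref{stab} to the corollary is purely formal.
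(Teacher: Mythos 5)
Your argument is correct and is essentially the same (purely formal) deduction the paper intends: the overtwisted disk constructed in the proof of Theorem~\ref{stab} lies in the complement of $\widetilde{K_{stab}}$, and since $\xi_n$ restricts to the contact structure there, the same embedded disk witnesses overtwistedness of the closed cover $(\Sigma_n(K_{stab}), \xi_n)$. The paper states the corollary without further proof precisely because, as you note, the passage from the theorem to the corollary requires no additional work.
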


\begin{remark}
Theorem~\ref{stab} states more than that the $n$-fold cyclic branched cover branched along $K_{stab}$ is overtwisted. Using this argument, we see that the overtwisted disks stay away from the branch locus and are wholly contained in the complement. 
\end{remark}

\section{Future Research}
The authors of \cite{HaKaPl} were interested in distinguishing transverse knots by focusing on the contact structure obtained from the branched cover branched along the transverse knots. They were able to find an example of non-isotopic knots with non-contactomorphic branched covers. Additionally, they found several examples of smoothly isotopic transverse knots with the same self-linking number and contactomorphic branched covers. However, they could not distinguish transverse knots by their $n$-fold cyclic covers. Another unexplored possibility is to consider any potential information that can be obtained from the lift of the transverse knot. In a future project, we will consider the lift $\tilde{K}$ of the transverse knot $K$ in $(\Sigma_n(K), \xi_n)$.

\section*{Acknowledgement} 

In memory of Dr. Richard A. Litherland. Dr. Litherland made significant use of branched covers in his work, most notably in his collaboration with Dr. Cameron Gordon in \cite{GL}, where the Gordon-Litherland pairing was introduced. He also explored how different graph colorings affect the homology of covers in \cite{L}. Additionally, I am forever grateful for his Introduction to Topology course at Louisiana State University, where I first discovered my passion for topology. I would also like to thank my graduate advisor, Dr. David Shea Vela-Vick, for providing valuable advice and guidance.


\bibliography{References}
\bibliographystyle{plain}

\end{document}